\newtheorem{thm}{Theorem}[section]
\newtheorem{lem}{Lemma}[section]
\newtheorem{prop}{Proposition}[section]
\newtheorem{ex}{Example}[section]
\newtheorem{rem}{Remark}[section]
\numberwithin{equation}{section}
\begin{document}

\title[Approximation theorems connected with Dunkl operator]{Approximation theorems connected with differential-difference operator}

\author[C. Abdelkefi \and S. Chabchoub]{Chokri Abdelkefi* \and Safa Chabchoub**
}
\newcommand{\acr}{\newline\indent}
\address{\llap{*\,}
Department of Mathematics\acr Preparatory Institute of Engineer
Studies of Tunis \acr 1089 Monfleury Tunis, University of Tunis\acr
Tunisia} \email{chokri.abdelkefi@yahoo.fr}
\address{\llap{**\,} Department of Mathematics\acr Faculty of Sciences of Tunis\acr 1060
Tunis, University of Tunis El Manar\acr Tunisia}
\email{safachabchoub@yahoo.com}

\thanks{This work was completed with the support of the DGRST research project LR11ES11,
University of Tunis El Manar, Tunisia.}

\subjclass{Primary 44A15, 46E30; Secondary 44A35.} \keywords{Dunkl
operator, Dunkl translation operators, Generalized Taylor formula,
Besov-Dunkl spaces.}
\begin{abstract}
In the present paper, we propose to give an extension to the context
of Dunkl theory of the notion of translation and in connection with
this a corresponding extension of Taylor's formula. More precisely,
we prove some properties and estimates of the integral remainder in
the generalized Taylor formula associated to the Dunkl operator on
the real line and we describe the Besov-type spaces for which the
remainder has a given order.
\end{abstract}
\maketitle
\section{Introduction}
\label{intro}   \label{intro}  Delsartes gave in \cite{Del1,Del2} a
certain extension of the notion of translation and
 in connection with this a corresponding extension of Taylor's
 formula. Generalized translation have later been considered from
 various points of view by many authors (see Levitan \cite{Lev}, Bochner \cite{Boch1,Boch2}).
 L\"{o}fstr\"{o}m and Peetre in \cite{L.P} estimated
 the remainder in the generalized Taylor's formula and they described the space of
 functions for which the remainder has a given order.\\

 Our aim in this paper is to extend the results obtained in
\cite{L.P} to the context of Dunkl theory. More precisely, we prove
some properties and estimates of the integral remainder of order $k$
associated to the Dunkl operator on the real line and we establish
the coincidence between two characterizations of Besov-type spaces
related to this integral
remainder.\\

 For a real parameter $\alpha > -\frac{1}{2}$, the
Dunkl operator on the real line denoted by $\Lambda_{\alpha}$, is a
differential-difference operator introduced in 1989 by C. Dunkl in
\cite{dun}. This operator is associated with the reflection group $
\mathbb{Z}_{2}$ on $\mathbb{R}$ and is given by
$$ \Lambda_\alpha f(x) = \frac{df}{dx} (x) + \frac{2\alpha+1}{x}
 \Big[\frac{f(x)-f(-x)}{2}\Big],\; f \in \mathcal{C}^{1}(\mathbb{R}).$$
The Dunkl operator can be considered as a perturbation of the usual
derivative by reflection part. This operator plays a major role in
the study of quantum harmonic oscillators governed by Wigner's
commutation rules (see \cite{rose}). The Dunkl kernel $E_{\alpha}$
related to $\Lambda_{\alpha}$ is used to define the Dunkl transform
which enjoys properties similar to those of the classical Fourier
transform. The Dunkl kernel $E_{\alpha}$
 satisfies a product formula (see \cite{ro1}). This allows us to define
the Dunkl translation $\tau_{x}$, $x\in\mathbb{R}$ (see next
section). If the parameter $\alpha = -\frac{1}{2}$, then the
operator $\Lambda_\alpha$ reduces to the differential operator
$\frac{df}{dx}$. Therefore Dunkl analysis can be viewed as a
generalization of the classical Fourier analysis on $\mathbb{R}$.

In 2003, the classical Taylor formula with integral remainder was
extended in \cite{mou} to the one dimensional Dunkl operator
$\Lambda_{\alpha}$ :\\ For $k=1,2,...,$ $f \in
\mathcal{E}(\mathbb{R})$ and $ a \in \mathbb{R}$, we have
\begin{align*} \tau _x(f)(a) = \sum_{p=0}^{k-1} b_p(x) \Lambda_\alpha^p f(a)
 +  R_k(x,f)(a),\quad x \in \mathbb{\mathbb{R}}\backslash\{0\} ,\end{align*}
with $R_k(x,f)(a)$ is the integral remainder of order $k$ given by
 \begin{align*} \displaystyle R_k(x,f)(a)= \int_{-|x|}^{|x|} \Theta_{k-1} (x,y)
  \tau_y (\Lambda_\alpha^{k} f)(a) A_\alpha(y) dy,\end{align*}
 where $\mathcal{E}(\mathbb{R})$ is the space of infinitely
differentiable functions on $\mathbb{R}$ and $ (\Theta_{p})_{p\in
\mathbb{N}}$, $(b_p)_{p\in \mathbb{N}}$ are two sequences of
 functions constructed inductively from the function $A_\alpha$ defined on $\mathbb{R}$ by $A_\alpha(x)=
 |x|^{2\alpha+1}$ (see next section).

There are many ways to define the Besov spaces (see
\cite{An,Bes,Pe}) and the Besov-Dunkl spaces (see
\cite{ab1,ab2,ab3,ab4}). It is well known that Besov spaces can be
described by means of differences using the modulus of continuity of
functions. These spaces defined by the modulus of smoothness occur
more naturally in many areas of analysis including approximation
theory.

In this paper we define the following weighted function spaces:
\\Let $0<\beta <1$, $1 \leq p < +\infty $, $ 1 \leq q \leq +\infty$
and $k$ a positive integer $(k=1, 2,...)$.
\\$\bullet$ We denote by $L^p(\mu_\alpha)$ the space of complex-valued functions
$f$, measurable on $\mathbb{R}$ such that
$$\|f\|_{p,\alpha} = \left(\int_{ \mathbb{R}}|f(x)|^p
d\mu_\alpha(x) \right)^{1/p} < + \infty,$$ where $\mu_\alpha$ is a
weighted Lebesgue measure associated to the Dunkl operator given by
\begin{align*}d\mu_\alpha(x) = \frac{A_\alpha(x)}{2^{\alpha +1}\Gamma(\alpha
+1)}dx,\end{align*} with $A_\alpha$ is the function defined on
$\mathbb{R}$ by
$$A_\alpha(x) = |x|^{2\alpha+1},\quad x \in \mathbb{R}.$$
$\bullet$ The Besov-Dunkl space of order $k$ denoted
 by $\mathcal{B}^k\mathcal{D}_{p,q}^{\beta,\alpha}$
is the subspace of functions $f$ in $\mathcal{E}(\mathbb{R})\cap
L^p(\mu_\alpha)$ such that $\Lambda_\alpha^{k-1}f \in
L^p(\mu_\alpha)$ and satisfying
\begin{align*} \int_0^{+\infty} \Big(\frac{\omega_{p,\alpha}^k(x,f)}{x^{\beta+k-1}}\Big)^q
 \frac{dx}{x} < +\infty \quad &if \quad  q < +\infty \\
\mbox{and} \qquad
    \sup_{x > 0} \frac{\omega_{p,\alpha}^k(x,f)}{x^{\beta+k-1}} < +\infty  \quad &if \quad   q =
  +\infty,\end{align*}
with $\omega_{p,\alpha}^k(x,f) =\displaystyle \sup_{|y| \leq x} \|
R_{k-1}(y, f)-b_{k-1}(y)\Lambda_\alpha^{k-1}f\|_{p,\alpha},$ where
we put $\Lambda_\alpha^{0}f=f$ and $R_{0}(x, f)=\tau _x(f),$ for
$k=1.$
 \\$\bullet$ Put $\mathcal{D}_{p,\alpha}^k$ the subspace of functions $f$ in
$\mathcal{E}(\mathbb{R})\cap L^p(\mu_\alpha)$ such that
  $ \Lambda_\alpha^k f$ are in $L^p(\mu_\alpha)$.
We consider the subspace
$\mathcal{K}^k\mathcal{D}_{p,q}^{\beta,\alpha}$ of functions
 $f \in \mathcal{D}_{p,\alpha}^{k-1}+\mathcal{D}_{p,\alpha}^{k} $ satisfying
 \begin{align*}
 \int_0^{+\infty} \Big(\frac{K_{p,\alpha}^k(x,f)}{x^{\beta}}\Big)^q \frac{dx}{x} < +\infty \quad &if \quad  q <
 +\infty\\
\mbox{and}\qquad
   \sup_{x > 0} \frac{K_{p,\alpha}^k(x,f)}{x^{\beta}} < +\infty \quad &if \quad    q = +\infty,
\end{align*} where $K_{p,\alpha}^k $  is the Peetre K-functional given by
$$K_{p,\alpha}^k(x,f)= \inf_{f=f_0+f_1}\Big\{\| \Lambda_\alpha^{k-1} f_0 \|_{p,\alpha}+ x \| \Lambda_\alpha^k f_1
\|_{p,\alpha},\,
 f_0 \in \mathcal{D}_{p,\alpha}^{k-1},\, f_1 \in
 \mathcal{D}_{p,\alpha}^k\Big\}.$$

The contents of the present paper are as follows. \\In section 2, we
collect some basic definitions and results about harmonic analysis
associated with the Dunkl operator $\Lambda_\alpha$. \\
In section 3, we give some properties and estimates of the integral
remainder of order $k$. Finally, we establish that
$$\mathcal{B}^k\mathcal{D}_{p,q}^{\beta,\alpha}=\mathcal{K}^k\mathcal{D}_{p,q}^{\beta,\alpha}.$$

Along this paper, we use $c$ to represent a suitable positive
constant which is not necessarily the same in each occurrence.
\section{Preliminaries}
\label{sec:1}  In this section, we recall some notations and results
in Dunkl theory on $\mathbb{R}$ and we refer for more details to \cite{am,dun,ro1}.\\

 For $\lambda \in \mathbb{C}$, the initial problem
$$\Lambda_\alpha(f)(x) = \lambda f(x),\quad f(0) = 1,\quad x \in \mathbb{R},$$
has a unique solution $E_\alpha(\lambda .)$ called Dunkl kernel
given by
$$E_\alpha(\lambda x) = j_\alpha(i\lambda x) + \frac{\lambda x}
{2(\alpha+1)} j_{\alpha+1} (i\lambda x),\quad x \in \mathbb{R},$$
where $j_\alpha$ is the normalized Bessel function of the first kind
and order $\alpha.$

 The Dunkl kernel $E_\alpha$ satisfies the following
product formula
$$E_\alpha(ixt) E_\alpha(iyt) = \int_{\mathbb{R}} E_\alpha(itz)
d\gamma_{x,y}(z),\quad x, y, t \in \mathbb{R}, $$ where
$\gamma_{x,y}$ is a signed measure on $\mathbb{R}$ with compact
support.

For $x, y \in \mathbb{R}$ and $f$ a continuous function on
$\mathbb{R}$, the Dunkl translation operator $\tau_x$ is given by
 $$\tau_x(f)(y) =\int_{\mathbb{R}} f(z) d\gamma_{x,y}(z).$$.
 \\The Dunkl translation
operator satisfies the following properties :
\begin{enumerate}
\item $\tau_x$ is a continuous linear operator from
$\mathcal{E}( \mathbb{R})$ into itself.
\item For all $f \in
\mathcal{E}(\mathbb{R})$,  we have \begin{align*}\tau_x(f)(y) =
\tau_y(f)(x)\quad\mbox{and}\quad \tau_0(f)(x) = f(x)\end{align*}
\begin{align}\tau_x \,o\, \tau_y = \tau_y\,o\,\tau_x\quad\mbox{and}\quad
\Lambda_\alpha \,o\,\tau_x = \tau_x \,o\,\Lambda_\alpha .\end{align}
\item For all $x \in \mathbb{R}$, the
operator $\tau_x$ extends to $L^p(\mu_\alpha),\; p \geq 1$ and we
have for $f$ in $L^p(\mu_\alpha)$
\begin{align}
\|\tau_x(f)\|_{p,\alpha} \leq \sqrt{2} \|f\|_{p,\alpha}.
\end{align}
\end{enumerate}
It has been shown in \cite{mou}, the following generalized Taylor
formula with integral remainder:
\begin{prop} For $k=1,2,...,$ $f \in
\mathcal{E}(\mathbb{R})$ and $ a \in \mathbb{R}$, we have
\begin{eqnarray} \tau _x f(a) = \sum_{p=0}^{k-1} b_p(x) \Lambda_\alpha^p f(a) +  R_k(x,f)(a),\quad x \in \mathbb{R}\backslash\{0\} ,\end{eqnarray}
with $ R_k(x,f)(a)$ is the integral remainder of order $k$ given by
 \begin{eqnarray} \displaystyle R_k(x,f)(a)= \int_{-|x|}^{|x|} \Theta_{k-1} (x,y) \tau_y (\Lambda_\alpha^{k} f)(a) A_\alpha(y) dy,\end{eqnarray}
where\begin{enumerate}
\item[(i)] $\displaystyle b_{2m}(x)= \frac{1}{(\alpha+1)_m m!}  \Big(\frac{x}{2} \Big)^{2m}$ and $\;\displaystyle
  b_{2m+1}(x)= \frac{1}{(\alpha+1)_{m+1} m!}  \Big(\frac{x}{2}
  \Big)^{2m+1}$, for $\;m\in \mathbb{N}.$
 \item[(ii)] $ \Theta_{k-1}(x,y) = u_{k-1}(x,y) + v_{k-1}(x,y)\;$
 with
   $\;\displaystyle u_0(x,y)= \frac{sgn(x)}{2 A_\alpha(x)} , $\\ $\,\displaystyle v_0(x,y)= \frac{sgn(y)}{2
 A_\alpha(y)},$
 $\displaystyle u_k(x,y)= \int_{|y|}^{|x|} v_{k-1}(x,z) dz\;$ and \\$\;\displaystyle
 v_k(x,y)= \frac{sgn(y)}{ A_\alpha(y)}\int_{|y|}^{|x|} u_{k-1}(x,z)A_\alpha(z)
   dz.$
   \end{enumerate}
   \end{prop}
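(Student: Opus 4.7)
The plan is to proceed by induction on $k$, with the inductive step relying on two kernel-level identities that encode the recursions defining $u_k$, $v_k$, and $b_k$. For the base case $k=1$, I would set $g(y)=\tau_y f(a)$ and use the intertwining $\Lambda_\alpha\circ\tau_y=\tau_y\circ\Lambda_\alpha$ to get $(\Lambda_\alpha g)(y)=\tau_y(\Lambda_\alpha f)(a)$. Decomposing $g=g_e+g_o$ into even and odd parts in $y$, a direct computation shows that the even part of $\Lambda_\alpha g$ equals $y^{-(2\alpha+1)}\tfrac{d}{dy}\bigl[y^{2\alpha+1}g_o(y)\bigr]$ while the odd part equals $g_e'(y)$. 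For $x>0$, integrating these from $0$ to $x$ recovers $g_o(x)$ and $g_e(x)-g(0)$ respectively; the boundary term at $y=0$ vanishes thanks to $\alpha>-\tfrac12$. Rewriting both integrals over the symmetric interval $(-|x|,|x|)$ by parity produces exactly $R_1(x,f)(a)$ with the kernel $\Theta_0(x,y)=\tfrac{\mathrm{sgn}(x)}{2A_\alpha(x)}+\tfrac{\mathrm{sgn}(y)}{2A_\alpha(y)}$; the case $x<0$ follows by the same argument with obvious sign changes.

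For the inductive step, assume the formula at level $k$. I would apply the already-established level-$1$ formula to $\Lambda_\alpha^k f$ in the variable $y$:
\begin{equation*}
\tau_y(\Lambda_\alpha^k f)(a)=\Lambda_\alpha^k f(a)+\int_{-|y|}^{|y|}\Theta_0(y,z)\,\tau_z(\Lambda_\alpha^{k+1}f)(a)\,A_\alpha(z)\,dz.
\end{equation*}
Substitute this into $R_k(x,f)(a)$, pull the constant $\Lambda_\alpha^k f(a)$ out of the resulting integral, and apply Fubini to the remaining double integral. Matching against the target $R_k(x,f)(a)=b_k(x)\Lambda_\alpha^k f(a)+R_{k+1}(x,f)(a)$ reduces the step to two $f$-independent identities: (A) $\int_{-|x|}^{|x|}\Theta_{k-1}(x,y)A_\alpha(y)\,dy=b_k(x)$, and (B) $\int_{|z|\le|y|\le|x|}\Theta_{k-1}(x,y)\Theta_0(y,z)A_\alpha(y)\,dy=\Theta_k(x,z)$. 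Identity (A) falls out of the inductive hypothesis applied to the polynomial $f=b_k$ at $a=0$: the elementary identity $\Lambda_\alpha b_p=b_{p-1}$ (a direct check from the explicit formulas for $b_{2m}$ and $b_{2m+1}$) together with $b_j(0)=0$ for $j\ge 1$ annihilates all non-remainder terms, while $\tau_z(1)(0)=1$ collapses the remainder onto exactly the left-hand side of (A), which must therefore equal $\tau_x b_k(0)=b_k(x)$. For identity (B), the preparatory observation is that $u_k(x,\cdot)$ is even in $y$ and $v_k(x,\cdot)$ is odd in $y$: this follows immediately from the recursions, since $u_k(x,y)$ depends only on $|y|$, while $v_k(x,y)$ is the product of the odd factor $\tfrac{\mathrm{sgn}(y)}{A_\alpha(y)}$ with a quantity depending only on $|y|$. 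Expanding $\Theta_{k-1}(x,y)\Theta_0(y,z)$ into four cross terms on the symmetric annulus $|z|\le|y|\le|x|$, the two odd-in-$y$ contributions vanish, and the two surviving contributions reproduce exactly $\int_{|z|}^{|x|}v_{k-1}(x,y)\,dy+\tfrac{\mathrm{sgn}(z)}{A_\alpha(z)}\int_{|z|}^{|x|}u_{k-1}(x,y)A_\alpha(y)\,dy=u_k(x,z)+v_k(x,z)=\Theta_k(x,z)$.

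The main obstacle is the sign and parity bookkeeping in (B): both $\Theta_{k-1}$ and $\Theta_0$ split additively into pieces of different parities, and the entire argument hinges on correctly identifying which of the four cross contributions over the symmetric annulus survive and recognising that the survivors exactly reconstruct the defining recursions of $u_k$ and $v_k$. Once the parity claim on $u_k$ and $v_k$ is in place and Fubini is justified for the smooth, compactly-integrated integrands involved, the remainder of the proof is routine.
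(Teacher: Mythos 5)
Your proof is correct, but note that the paper itself gives no proof of this proposition: it is quoted verbatim from Mourou's article [mou], so there is no in-paper argument to compare against. Judged on its own merits, your induction is sound. The base case computation (even/odd splitting of $\Lambda_\alpha g$, with the even part equal to $y^{-(2\alpha+1)}\tfrac{d}{dy}\bigl[y^{2\alpha+1}g_o(y)\bigr]$ and the odd part equal to $g_e'(y)$, integrated from $0$ to $x$) is exactly right, and the vanishing of the boundary term uses $\alpha>-\tfrac12$ as you say. Your identity (A) follows cleanly from the inductive hypothesis applied to $f=b_k$ at $a=0$ once one checks $\Lambda_\alpha b_p=b_{p-1}$ (which holds: for the odd case the two terms combine via $(\alpha+1)_{m+1}=(\alpha+1)_m(\alpha+m+1)$), together with $b_j(0)=0$ for $j\ge1$, $\tau_y(1)=1$ and $\tau_xb_k(0)=b_k(x)$. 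Your parity analysis in (B) is also correct: of the four cross terms on the symmetric annulus $|z|\le|y|\le|x|$, the terms $u_{k-1}(x,y)\,\mathrm{sgn}(y)/2A_\alpha(y)$ and $v_{k-1}(x,y)\,\mathrm{sgn}(z)/2A_\alpha(z)$ are odd in $y$ and vanish, while the two survivors reproduce $v_k(x,z)$ and $u_k(x,z)$ respectively. It is worth observing that your two kernel identities are mirror images of results the paper does prove later: (A) generalizes Lemma 3.2 (the case $\Theta_0$ against $b_p$), and (B) is the kernel-level counterpart of Lemma 3.3, which iterates with $\Theta_0$ as the \emph{outer} factor, $R_k(x,f)=\int\Theta_0(x,y)R_{k-1}(y,\Lambda_\alpha f)A_\alpha(y)\,dy$, whereas you compose with $\Theta_0$ as the inner factor. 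Either ordering yields the proposition; yours has the advantage of requiring only the first-order formula in the inductive step.
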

   According to (\cite{ro3}, Lemma 2.2), the Dunkl operator
   $\Lambda_\alpha$ have the following regularity properties:
\begin{align} \Lambda_\alpha \;\mbox{leaves}\;\, \mathcal{C}_c^\infty(\mathbb{R})\;
 \mbox{and} \;\mbox {the\, Schwartz\, space}\; \mathcal{S}(\mathbb{R})\; \mbox{invariant}. \end{align}
\section{Characterizations of Besov-Dunkl spaces of order $k$}
In this section, we start with the proof of some properties and
estimates of the integral remainder in the generalized Taylor
formula.
\begin{rem}
 Let $k=1,2,...,$ $f \in \mathcal{E}(\mathbb{R})$ and $ x \in
 \mathbb{R}\backslash\{0\}$.
 \begin{enumerate}
\item From Proposition 2.1, we have
  \begin{align}
R_k(x, f)& = \tau_x(f)- f-b_1(x)\Lambda_\alpha f...- b_{k-1}(x)\Lambda_\alpha^{k-1}f \nonumber  \\
  &=R_{k-1}(x, f)-b_{k-1}(x)\Lambda_\alpha^{k-1} f,
  \end{align} where we put for $k=1$, $R_{0}(x, f) = \tau_x(f).$
  Observe that $$R_1(x, f)=R_{0}(x, f)-b_{0}(x)\Lambda_\alpha^{0} f=\tau_x(f)-
  f.$$
  \item According to (\cite{mou}, p.352) and Proposition 2.1, (i), we have
 \begin{align}
 \displaystyle \int_{-|x|}^{|x|} |\Theta_{k-1} (x,y)|  A_\alpha(y) dy &\leq b_k(|x|)+|x| b_{k-1}(|x|)\nonumber \\
  &\leq c\, |x|^{k}.
\end{align}
\item Note that the function $y\longmapsto
\tau_y(f)-f$ is continuous on $\mathbb{R}$ (see \cite{mou.T}, Lemma
1, (ii)), which implies that the same is true for the function
$y\longmapsto R_k(y, f).$
 \end{enumerate}
\end{rem}
\begin{lem}
 Let $k=1,2,...,$ then there exists a constant $c>0$ such that for all $f \in \mathcal{E}(\mathbb{R})$ satisfying
 $\Lambda_\alpha^{k-1}f \in L^p(\mu_\alpha) $, we have
 \begin{align}
  \| R_{k-1}(x,f)\|_{p,\alpha} \leq c \,|x|^{k-1} \| \Lambda_\alpha^{k-1} f \|_{p,\alpha},\quad x \in \mathbb{R}
  \backslash\{0\}.
 \end{align}
 \end{lem}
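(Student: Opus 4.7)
The plan is to prove the inequality by splitting into the base case $k=1$ and the inductive/direct case $k\geq 2$, in each case leveraging the integral representation of the remainder together with the boundedness of the Dunkl translation.

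For $k=1$, the quantity to bound is $\|R_0(x,f)\|_{p,\alpha} = \|\tau_x(f)\|_{p,\alpha}$, and the desired estimate reduces to $\|\tau_x(f)\|_{p,\alpha}\leq c\|f\|_{p,\alpha}$, which is exactly inequality (2.2) with constant $c=\sqrt{2}$ (note that $|x|^{k-1}=1$ in this case). So the base case is essentially free.

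For $k\geq 2$, I would start from the integral representation given in Proposition 2.1, applied with the index shifted down by one:
\begin{equation*}
R_{k-1}(x,f)(a) = \int_{-|x|}^{|x|} \Theta_{k-2}(x,y)\,\tau_y(\Lambda_\alpha^{k-1}f)(a)\,A_\alpha(y)\,dy.
\end{equation*}
Taking the $L^p(\mu_\alpha)$-norm in the variable $a$ and using Minkowski's integral inequality to pull the norm inside the $y$-integral, I get
\begin{equation*}
\|R_{k-1}(x,f)\|_{p,\alpha}\leq \int_{-|x|}^{|x|} |\Theta_{k-2}(x,y)|\,\|\tau_y(\Lambda_\alpha^{k-1}f)\|_{p,\alpha}\,A_\alpha(y)\,dy.
\end{equation*}
Applying the translation bound (2.2) to control $\|\tau_y(\Lambda_\alpha^{k-1}f)\|_{p,\alpha}\leq \sqrt{2}\|\Lambda_\alpha^{k-1}f\|_{p,\alpha}$ (uniformly in $y$) pulls the $L^p$-norm out of the integral, and then the kernel estimate (3.2) from Remark 3.1(2), applied with $k$ replaced by $k-1$, yields $\int_{-|x|}^{|x|}|\Theta_{k-2}(x,y)|A_\alpha(y)\,dy \leq c|x|^{k-1}$. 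Combining these gives the claim.

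The main subtlety, rather than an obstacle, is bookkeeping: one must make sure the quantity $\Lambda_\alpha^{k-1}f$ actually sits in $L^p(\mu_\alpha)$ so that both (2.2) and Minkowski's inequality apply, which is precisely the hypothesis of the lemma. The $k=1$ case must be dispatched separately since it does not admit an integral representation in terms of $\Theta$, but it follows directly from (2.2). All other steps are standard: Minkowski in integral form, the uniform $L^p$-bound on $\tau_y$, and the already-recorded estimate on the kernel $\Theta_{k-2}$.
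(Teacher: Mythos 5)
Your proof is correct and follows exactly the paper's own argument: the $k=1$ case from the translation bound (2.2), and for $k\geq 2$ the integral representation (2.4) of $R_{k-1}$, Minkowski's integral inequality, the uniform bound (2.2) on $\tau_y$, and the kernel estimate (3.2) applied at level $k-1$. No differences worth noting.
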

\begin{proof}
 Let $k=1,2,...,$ $f \in \mathcal{E}(\mathbb{R})$ such that $\Lambda_\alpha^{k-1}f \in L^p(\mu_\alpha) $ and $x \in
 \mathbb{R}\backslash\{0\}$. For $k=1$, by (2.2), it's clear that $\|R_{0}(x, f)\| =\| \tau_x(f)\|_{p,\alpha}\leq c\,\|f\|_{p,\alpha}.$
Using Minkowski's inequality for integrals, (2.2) and (2.4), we have
for $k\geq2$
 \begin{align*}
  \| R_{k-1}(x,f)\|_{p,\alpha}  &\leq \int_{-|x|}^{|x|}| \Theta_{k-2} (x,y)| \
   \| \tau_y ( \Lambda_\alpha^{k-1} f)\|_{p,\alpha} A_\alpha(y) dy\\
   &\leq c \;  \|   \Lambda_\alpha^{k-1} f \|_{p,\alpha} \int_{-|x|}^{|x|} |\Theta_{k-2} (x,y)| A_\alpha(y) dy.
 \end{align*}
From (3.2),  we deduce our result.
\end{proof}
\begin{rem} Let $k=1,2,...,$ $f\in\mathcal{E}(\mathbb{R})$ such that $\Lambda_\alpha^{k-1}f \in L^p(\mu_\alpha).$
Then for $x \in \mathbb{R}\backslash\{0\},$ we have by (3.1), (3.3)
and Proposition 2.1,
 \begin{align}
 \|R_k(x,f)\|_{p,\alpha} &\leq
\|R_{k-1}(x,f)\|_{p,\alpha}+
    \|b_{k-1}(x) \Lambda_\alpha^{k-1} f\|_{p,\alpha} \nonumber
     \\&\leq c \,|x|^{k-1} \|\Lambda_\alpha^{k-1} f \|_{p,\alpha}.
 \end{align}
\end{rem}
\begin{lem}
For $x \in \mathbb{R}\backslash\{0\}$ and $p\in \mathbb{N}$, we have
\begin{align}
 \int_{-|x|}^{|x|} \Theta_0 (x,y)  b_p(y)  A_\alpha(y) dy = b_{p+1}(x).
\end{align}
\end{lem}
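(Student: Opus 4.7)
The plan is a direct computation. First I would substitute the explicit formula $\Theta_{0}(x,y) = u_{0}(x,y) + v_{0}(x,y) = \dfrac{\operatorname{sgn}(x)}{2 A_{\alpha}(x)} + \dfrac{\operatorname{sgn}(y)}{2 A_{\alpha}(y)}$ from Proposition 2.1 (ii), so that the integral splits as
\begin{align*}
\int_{-|x|}^{|x|} \Theta_{0}(x,y) b_{p}(y) A_{\alpha}(y)\, dy
= \frac{\operatorname{sgn}(x)}{2 A_{\alpha}(x)} \int_{-|x|}^{|x|} b_{p}(y) A_{\alpha}(y)\, dy
+ \frac{1}{2} \int_{-|x|}^{|x|} \operatorname{sgn}(y) b_{p}(y)\, dy,
\end{align*}
where I used that $A_{\alpha}(y) = |y|^{2\alpha+1}$ makes $\dfrac{\operatorname{sgn}(y)}{A_{\alpha}(y)} A_{\alpha}(y) = \operatorname{sgn}(y)$.

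Next I would split on the parity of $p$. When $p = 2m$, the function $b_{p}$ is even (it is a multiple of $(y/2)^{2m}$), so $b_{p}(y) A_{\alpha}(y)$ is even while $\operatorname{sgn}(y) b_{p}(y)$ is odd, killing the second piece; when $p = 2m+1$, the function $b_{p}$ is odd, so the first integrand is odd and vanishes while $\operatorname{sgn}(y) b_{p}(y)$ is even. In each case only one of the two integrals survives, reducing to an integral over $[0,|x|]$ of a single power of $y$ weighted by $A_{\alpha}$ or not.

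The remaining step is the explicit evaluation of these two integrals: for $p = 2m$, $\int_{0}^{|x|} y^{2m+2\alpha+1}\, dy = \dfrac{|x|^{2m+2\alpha+2}}{2m+2\alpha+2}$, after which one clears powers of $2$ and uses the identity $(\alpha+1)_{m}(m+\alpha+1) = (\alpha+1)_{m+1}$ to recognize the result as $b_{2m+1}(x)$; for $p = 2m+1$, $\int_{0}^{|x|} y^{2m+1}\, dy = \dfrac{|x|^{2m+2}}{2m+2}$, and the factor $m+1$ converts $m!$ to $(m+1)!$, producing $b_{2m+2}(x)$. In both cases one lands on $b_{p+1}(x)$.

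The computation is essentially bookkeeping; the only mild obstacle is to keep the powers of $2$ straight and to remember the shift relation for the Pochhammer symbol $(\alpha+1)_{m+1} = (m+\alpha+1)(\alpha+1)_{m}$. No new ideas beyond Proposition 2.1 are needed, and parity arguments handle the $\operatorname{sgn}$ factors cleanly.
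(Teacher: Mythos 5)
Your proposal is correct and follows essentially the same route as the paper's proof: substitute the explicit form of $\Theta_0$, use parity to kill one of the two resulting integrals according to whether $p$ is even or odd, and evaluate the surviving power integral with the Pochhammer shift $(\alpha+1)_{m+1}=(m+\alpha+1)(\alpha+1)_m$ (respectively $(m+1)\,m!=(m+1)!$) to identify $b_{p+1}(x)$. Nothing is missing.
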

\begin{proof}
 Let $x\in \mathbb{R}\backslash\{0\} $. Using Proposition 2.1, we
 have:\\
$\bullet$ If $p=2m,$ $m\in \mathbb{
N},$\\$\displaystyle\int_{-|x|}^{|x|} \Theta_0 (x,y) b_{2m}(y)
A_\alpha(y)
 dy$
  \begin{align*}
    & = \int_{-|x|}^{|x|} \frac{sgn(x)|y|^{2\alpha+1}}{2|x|^{2\alpha+1}} b_{2m}(y) dy + \int_{-|x|}^{|x|} \frac{sgn(y)}{2} b_{2m}(y)  dy \\
   &= \frac{x}{2^{2m}|x|^{2\alpha+2}(\alpha+1)_m m!}\int_{0}^{|x|} y^{2\alpha+2m+1} dy\\
   &=\frac{x}{2^{2m}(\alpha+1)_m m!} \frac{|x|^{2m}}{2 (\alpha +m+1)} \\
   &=b_{2m+1}(x).
  \end{align*}
$\bullet$ If $p =2m+1,$ $m\in \mathbb{N},$ we get\\
$\displaystyle\int_{-|x|}^{|x|}  \Theta_0 (x,y)  b_{2m+1}(y)
A_\alpha(y) dy$
  \begin{align*}
      & = \int_{-|x|}^{|x|} \frac{sgn(x)|y|^{2\alpha+1}}{2|x|^{2\alpha+1}} b_{2m+1}(y) dy + \int_{-|x|}^{|x|} \frac{sgn(y)}{2} b_{2m+1}(y)  dy \\
   &=  \frac{1}{2^{2m+1}(\alpha+1)_{m+1} m!}\int_{0}^{|x|} y^{2m+1} dy\\
   &=\frac{1}{2^{2m+1}(\alpha+1)_{m+1} m!} \frac{|x|^{2m+2}}{2 (m+1)} \\
   &= b_{2m+2}(x).
  \end{align*}
Hence the Lemma is proved.
\end{proof}
\begin{lem}
  Let $k=1,2,...,$ $f \in \mathcal{E}(\mathbb{R})$, $x \in \mathbb{R}\backslash\{0\}$ and $a \in
  \mathbb{R}$. Then we have,
 \begin{align}
R_k(x, f)(a) =\int_{-|x|}^{|x|} \Theta_0(x,y)
R_{k-1}(y,\Lambda_\alpha f)(a) A_\alpha(y) dy.
\end{align}
\end{lem}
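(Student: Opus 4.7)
The plan is to start from the right-hand side of the claimed identity and expand $R_{k-1}(y, \Lambda_\alpha f)(a)$ via the generalized Taylor formula (Proposition 2.1) applied with $f$ replaced by $\Lambda_\alpha f$ and $k$ replaced by $k-1$. This yields
\begin{equation*}
R_{k-1}(y,\Lambda_\alpha f)(a) = \tau_y(\Lambda_\alpha f)(a) - \sum_{p=0}^{k-2} b_p(y)\,\Lambda_\alpha^{p+1} f(a),
\end{equation*}
valid for $y \in \mathbb{R}\backslash\{0\}$, with the convention $R_0(y,\Lambda_\alpha f)(a) = \tau_y(\Lambda_\alpha f)(a)$ handling the case $k=1$.

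Next I would substitute this expansion into the integral $\int_{-|x|}^{|x|} \Theta_0(x,y)\, R_{k-1}(y,\Lambda_\alpha f)(a)\, A_\alpha(y)\, dy$ and split it into two pieces. The sum pieces factor as $\sum_{p=0}^{k-2} \Lambda_\alpha^{p+1}f(a) \int_{-|x|}^{|x|} \Theta_0(x,y)\, b_p(y)\, A_\alpha(y)\, dy$, and Lemma 3.2 evaluates each of these integrals to $b_{p+1}(x)$. The remaining piece $\int_{-|x|}^{|x|} \Theta_0(x,y)\,\tau_y(\Lambda_\alpha f)(a)\, A_\alpha(y)\, dy$ is, by (2.4) with $k=1$, exactly $R_1(x,f)(a)$, which by Remark 3.1 equals $\tau_x(f)(a) - f(a)$.

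Assembling the pieces and reindexing the sum $q = p+1$ gives
\begin{equation*}
\int_{-|x|}^{|x|} \Theta_0(x,y) R_{k-1}(y,\Lambda_\alpha f)(a) A_\alpha(y)\, dy = \tau_x(f)(a) - \sum_{q=0}^{k-1} b_q(x)\,\Lambda_\alpha^q f(a),
\end{equation*}
where we have absorbed the constant term using $b_0(x)=1$. By the generalized Taylor formula (2.3), the right-hand side is precisely $R_k(x,f)(a)$, which proves the lemma.

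The argument is essentially bookkeeping of indices combined with the two key ingredients already established (the Taylor expansion of $R_{k-1}$ and Lemma 3.2); the only point requiring minor care is the base case $k=1$, which collapses to the very definition (2.4) of $R_1(x,f)(a)$, so no separate argument is needed.
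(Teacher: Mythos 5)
Your proof is correct, but it is organized differently from the paper's. The paper proceeds by induction on $k$: the base case is exactly the identity $R_1(x,f)(a)=\int_{-|x|}^{|x|}\Theta_0(x,y)\tau_y(\Lambda_\alpha f)(a)A_\alpha(y)\,dy$ coming from (2.4), and the inductive step integrates the one-term recursion $R_k(y,\Lambda_\alpha f)=R_{k-1}(y,\Lambda_\alpha f)-b_{k-1}(y)\Lambda_\alpha^{k}f$ against $\Theta_0(x,y)A_\alpha(y)\,dy$, invoking Lemma 3.2 once per step. You instead unroll this induction into a single direct computation: you expand $R_{k-1}(y,\Lambda_\alpha f)(a)$ in full via the Taylor formula, integrate term by term (Lemma 3.2 for each $b_p$ term, (2.4) for the $\tau_y(\Lambda_\alpha f)$ term), and reassemble the result into $R_k(x,f)(a)$ using (2.3) and $b_0\equiv 1$. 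The two arguments rest on exactly the same ingredients --- Lemma 3.2 and the relation between $R_k$ and the partial Taylor sum --- so neither is more general; yours trades the induction for some index bookkeeping and has the small advantage that the case $k=1$ needs no separate treatment (the sum is empty), while the paper's version only ever manipulates one correction term at a time. The only point worth stating explicitly in your write-up is that the pointwise expansion of $R_{k-1}(y,\Lambda_\alpha f)(a)$ holds for $y\neq 0$, which suffices under the integral since $\{0\}$ is a null set.
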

\begin{proof}
Let $k=1,2,...,$ $f \in \mathcal{E}(\mathbb{R})$, $x \in
\mathbb{R}\backslash\{0\}$ and $a \in \mathbb{R}$. We have from
(2.3), (2.4) and the fact that $R_0(y,\Lambda_\alpha f)(a)=
\tau_y(\Lambda_\alpha  f),$
$$  R_1(x,f)(a) = (\tau_x(f)-f)(a)=\int_{-|x|}^{|x|} \Theta_0(x,y) \tau_y(\Lambda_\alpha f)(a) A_\alpha(y)
dy,
$$ hence the property (3.6) is true for $k=1.$
  \\ Suppose that
  $$ R_k(x,f)(a) =\int_{-|x|}^{|x|} \Theta_0(x,y) R_{k-1}(y,\Lambda_\alpha f)(a) A_\alpha(y) dy, $$
   then by (3.1) and (3.5), we get\\$\displaystyle\int_{-|x|}^{|x|} \Theta_0(x,y) R_k(y,\Lambda_\alpha f)(a) A_\alpha(y) dy$
 \begin{align*}
   &=\int_{-|x|}^{|x|} \Theta_0(x,y)R_{k-1}(y,\Lambda_\alpha f)(a)A_\alpha(y) dy \\
  &- \int_{-|x|}^{|x|} \Theta_0(x,y) b_{k-1}(y)\Lambda_\alpha^{k} f(a)A_\alpha(y) dy\\
  &= R_k(x,f)(a) - \Lambda_\alpha^k f(a)\int_{-|x|}^{|x|} \Theta_0(x,y) b_{k-1}(y)A_\alpha(y) dy \\
  &= R_k(x,f)(a) - b_k(x)\Lambda_\alpha^k f(a)\\
  &= R_{k+1}(x, f)(a).
  \end{align*}
 By induction, we deduce our result.
\end{proof}
\begin{lem}
Let $k=1,2,...,$ $f \in \mathcal{E}(\mathbb{R})$, $x \in
\mathbb{R}\backslash\{0\}$ and $a \in
  \mathbb{R}$. We denote by
 $$ \displaystyle I_1(x,f)(a) = \int_{-|x|}^{|x|} \Theta_0 (x,y)
  \tau_y (f)(a) A_\alpha(y) dy,$$
  and for  $k \geq 2$
  $$\displaystyle I_k(x,f)(a) = \int_{-|x|}^{|x|} \Theta_0 (x,y)
  I_{k-1}(y,f) (a) A_\alpha(y) dy.$$
Then, we have
\begin{align}\Lambda_\alpha^{k+1}
\big(I_k(x,f)\big)(a)&=\Lambda_\alpha^{k} \big(I_k(x,\Lambda_\alpha
f)\big)(a) \\\mbox{and}\qquad \Lambda_\alpha^k I_k(x,f)(a) &=
R_k(x,f)(a).\end{align}
\end{lem}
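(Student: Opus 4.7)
The plan is to prove the two identities (3.7) and (3.8) jointly by induction on $k$, using only (i) the commutation relation $\Lambda_\alpha \circ \tau_y = \tau_y \circ \Lambda_\alpha$ from (2.1), (ii) differentiation under the integral sign (which is legitimate because $f\in\mathcal{E}(\mathbb{R})$ and the $\Theta_0$ factors are independent of $a$), and (iii) the recursion formula (3.6) of Lemma~3.3.

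First I would settle (3.7). Because $\Theta_0(x,y) A_\alpha(y)$ is independent of $a$, each application of $\Lambda_\alpha$ in the variable $a$ can be moved inside the integral in $y$. For $k=1$, two applications of (2.1) give
\begin{align*}
\Lambda_\alpha^{2}\bigl(I_1(x,f)\bigr)(a) &= \int_{-|x|}^{|x|}\Theta_0(x,y)\,\tau_y(\Lambda_\alpha^{2}f)(a)\,A_\alpha(y)\,dy\\
&= \Lambda_\alpha\bigl(I_1(x,\Lambda_\alpha f)\bigr)(a).
\end{align*}
For the inductive step, assuming (3.7) at level $k-1$, one applies $\Lambda_\alpha$ once more to both sides and then plugs the resulting identity $\Lambda_\alpha^{k+1}I_{k-1}(y,f)(a)=\Lambda_\alpha^{k}I_{k-1}(y,\Lambda_\alpha f)(a)$ into the definition of $I_k$, again pulling $\Lambda_\alpha^{k+1}$ inside the integral defining $I_k$.

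Next I would prove (3.8) by induction, using (3.7). The base $k=1$ is direct: by (2.1) and the explicit form (2.4) of $R_1(x,f)(a)$,
\begin{align*}
\Lambda_\alpha I_1(x,f)(a) = \int_{-|x|}^{|x|}\Theta_0(x,y)\,\tau_y(\Lambda_\alpha f)(a)\,A_\alpha(y)\,dy = R_1(x,f)(a).
\end{align*}
For the inductive step, I move $\Lambda_\alpha^{k}$ inside the outer integral in the definition of $I_k$, then apply (3.7) at level $k-1$ to rewrite $\Lambda_\alpha^{k}I_{k-1}(y,f)(a)=\Lambda_\alpha^{k-1}I_{k-1}(y,\Lambda_\alpha f)(a)$, and finally invoke the inductive hypothesis of (3.8) to conclude that this equals $R_{k-1}(y,\Lambda_\alpha f)(a)$. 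The outer integral then becomes exactly $\int_{-|x|}^{|x|}\Theta_0(x,y)\,R_{k-1}(y,\Lambda_\alpha f)(a)\,A_\alpha(y)\,dy$, which is $R_k(x,f)(a)$ by Lemma~3.3.

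The only technical point is the legitimacy of bringing $\Lambda_\alpha$ (in the $a$-variable) under the $y$-integral; since $f\in\mathcal{E}(\mathbb{R})$ and $\tau_y$ is a continuous operator on $\mathcal{E}(\mathbb{R})$ (property~(1) after Proposition~2.1), the integrand and all of its $a$-derivatives are jointly continuous in $(y,a)$ on the compact interval $[-|x|,|x|]$, so the interchange is standard. No new estimate is needed; the proof is a clean double induction driven by the commutation $\Lambda_\alpha \circ \tau_y = \tau_y \circ \Lambda_\alpha$ and the recursion (3.6).
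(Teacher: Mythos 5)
Your proof is correct and follows essentially the same route as the paper: the same base cases via the commutation $\Lambda_\alpha\circ\tau_y=\tau_y\circ\Lambda_\alpha$, then a double induction in which (3.7) is proved first and then used, together with the recursion (3.6) of Lemma~3.3 and the inductive hypothesis applied to $\Lambda_\alpha f$, to advance (3.8). The only difference is that you explicitly justify pulling $\Lambda_\alpha$ under the $y$-integral, a point the paper leaves implicit.
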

\begin{proof} Let $k=1,2,...,$ $f \in \mathcal{E}(\mathbb{R})$, $x \in
\mathbb{R}\backslash\{0\}$ and $a \in
  \mathbb{R}$.\\
$\bullet$ Using (2.1), we have \begin{eqnarray*} \Lambda_\alpha^{2}
\big(I_1(x,f)\big)(a)&= &  \int_{-|x|}^{|x|} \Theta_0 (x,y)
  \Lambda_\alpha\tau_y (\Lambda_\alpha f)(a) A_\alpha(y) dy\\
  &=&\Lambda_\alpha\big( I_1(x,\Lambda_\alpha f)\big)(a).\end{eqnarray*}
    Suppose that $$\Lambda_\alpha^{k+1}
    \big(I_k(x,f)\big)(a)=\Lambda_\alpha^{k}\big(I_k(x,\Lambda_\alpha f)\big)(a),$$
this gives \begin{align*}\Lambda_\alpha^{k+2}
\big(I_{k+1}(x,f)\big)(a)&= \int_{-|x|}^{|x|} \Theta_0 (x,y)
 \Lambda_\alpha \big(\Lambda_\alpha^{k+1}
    I_k(y,f)\big)(a) A_\alpha(y) dy\\&= \int_{-|x|}^{|x|} \Theta_0 (x,y)
 \Lambda_\alpha \big(\Lambda_\alpha^{k}
    I_k(y,\Lambda_\alpha f)\big)(a) A_\alpha(y) dy\\&= \Lambda_\alpha^{k+1}
\big(I_{k+1}(x,\Lambda_\alpha f)\big)(a).\end{align*}
 Then, we obtain (3.7) by induction.\\
$\bullet$ From (2.1) and (2.4), we can write
 \begin{align*}
   \Lambda_\alpha \big(I_1(x,f)\big)(a) &=    \int_{-|x|}^{|x|} \Theta_0 (x,y) \Lambda_\alpha(\tau_y f)(a) A_\alpha(y) dy\\
&=  \int_{-|x|}^{|x|} \Theta_0 (x,y)\tau_y (\Lambda_\alpha f)(a)A_\alpha(y) dy\\
&= R_1(x,f)(a).
   \end{align*}
    Suppose that
   $$\Lambda_\alpha^k\big( I_k(x,f)\big)(a) = R_k(x,f)(a),$$
    then by (3.6) and (3.7), we have
    \begin{align*}
     \Lambda_\alpha^{k+1} \big(I_{k+1}(x,f)\big)(a)&= \int_{-|x|}^{|x|} \Theta_0 (x,y) \Lambda_\alpha^{k+1}
      \big(I_k(y,f)\big)(a)A_\alpha(y) dy\\
     &=  \int_{-|x|}^{|x|} \Theta_0 (x,y) \Lambda_\alpha^k \big(I_k(y,\Lambda_\alpha f)\big)(a)A_\alpha(y) dy\\
     &=  \int_{-|x|}^{|x|} \Theta_0 (x,y) R_k(y,\Lambda_\alpha f)(a)A_\alpha(y) dy\\
     &= R_{k+1}(x,f)(a).
    \end{align*} By induction, this gives (3.8).
\end{proof}

Before establishing that
$\mathcal{B}^k\mathcal{D}_{p,q}^{\beta,\alpha}=\mathcal{K}^k\mathcal{D}_{p,q}^{\beta,\alpha},$
we give a remark, a proposition containing sufficient conditions and
an example.
\begin{rem} For $k=1, 2,...,$ $f\in \mathcal{E}(\mathbb{R})\cap L^p(\mu_\alpha)$ such that
$\Lambda_\alpha^{k-1}f$ is in $L^p(\mu_\alpha)$ and
$x\in(0,+\infty),$ we can assert from (3.1) that
\begin{enumerate}
  \item $\omega_{p,\alpha}^k(x,f)=\displaystyle\sup_{|y| \leq
x} \| R_{k}(y,f)\|_{p,\alpha}.$
\item For $k=1$, we have  $\omega_{p,\alpha}^k(x,f) =\displaystyle
\sup_{|y| \leq x} \| \tau_y(f)- f\|_{p,\alpha}.$
\end{enumerate}
\end{rem}
\begin{prop} Let $k=1, 2,...,$ $ 1 \leq p < +\infty $, $ 1 \leq q \leq +\infty$,
$0<\beta<1$ and $f\in \mathcal{E}(\mathbb{R})\cap L^p(\mu_\alpha).$
If $\Lambda_\alpha^{k-1}f$ and $\Lambda_\alpha^k f$ are in
$L^p(\mu_\alpha)$, then
$f\in\mathcal{B}^k\mathcal{D}_{p,q}^{\beta,\alpha}.$
\end{prop}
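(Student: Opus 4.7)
The plan is to control $\omega_{p,\alpha}^k(x,f)=\sup_{|y|\leq x}\|R_k(y,f)\|_{p,\alpha}$ by two complementary bounds, one good for small $x$ and one for large $x$, and then split the defining integral of the Besov seminorm at $x=1$ to exploit each bound in its favorable range.

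First I would establish two pointwise estimates on the remainder. The bound
$$\|R_k(x,f)\|_{p,\alpha} \leq c\,|x|^{k-1}\|\Lambda_\alpha^{k-1}f\|_{p,\alpha}$$
is already recorded in (3.4) under the hypothesis $\Lambda_\alpha^{k-1}f\in L^p(\mu_\alpha)$. Since we also assume $\Lambda_\alpha^k f\in L^p(\mu_\alpha)$, I can apply Minkowski's inequality for integrals to the integral representation (2.4), then use the $L^p$-translation bound (2.2) and the size estimate (3.2), $\int_{-|x|}^{|x|}|\Theta_{k-1}(x,y)|A_\alpha(y)\,dy\leq c|x|^k$, to obtain a sharper estimate:
$$\|R_k(x,f)\|_{p,\alpha} \leq c\,|x|^k\,\|\Lambda_\alpha^k f\|_{p,\alpha}.$$
By Remark 3.3 these transfer to
$$\omega_{p,\alpha}^k(x,f)\leq c\min\bigl\{x^k\|\Lambda_\alpha^k f\|_{p,\alpha},\;x^{k-1}\|\Lambda_\alpha^{k-1}f\|_{p,\alpha}\bigr\},\quad x>0.$$

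Next, for $1\leq q<+\infty$ I split $\int_0^{+\infty}$ at $x=1$. On $(0,1]$ the first bound yields an integrand of size $c\,x^{q(1-\beta)-1}$, integrable because $1-\beta>0$; on $[1,+\infty)$ the second bound yields an integrand of size $c\,x^{-q\beta-1}$, integrable because $\beta>0$. For $q=+\infty$ the same dichotomy gives $\sup_{0<x\leq 1}\omega_{p,\alpha}^k(x,f)/x^{\beta+k-1}\leq c\|\Lambda_\alpha^k f\|_{p,\alpha}$ and $\sup_{x\geq 1}\omega_{p,\alpha}^k(x,f)/x^{\beta+k-1}\leq c\|\Lambda_\alpha^{k-1}f\|_{p,\alpha}$, so the seminorm is finite in all cases.

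The only nontrivial point is securing the sharper estimate involving $\Lambda_\alpha^k f$: all ingredients, namely the explicit form (2.4) of $R_k$, the kernel bound (3.2), and the $L^p$-boundedness (2.2) of $\tau_y$, are already in place, so this step is essentially the same calculation as Lemma 3.2 applied at the top order $k$ rather than $k-1$. Once both estimates are in hand, the split-integral argument closes the proof in both the $q<+\infty$ and $q=+\infty$ cases, and the membership $f\in\mathcal{B}^k\mathcal{D}_{p,q}^{\beta,\alpha}$ follows.
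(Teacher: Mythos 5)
Your proof is correct and follows essentially the same route as the paper: both obtain the two bounds $\omega_{p,\alpha}^k(x,f)\leq c\,x^{k}\|\Lambda_\alpha^{k}f\|_{p,\alpha}$ and $\omega_{p,\alpha}^k(x,f)\leq c\,x^{k-1}\|\Lambda_\alpha^{k-1}f\|_{p,\alpha}$ (the paper simply invokes (3.3) at order $k+1$ where you redo the Lemma 3.2 computation) and then split the integral at $x=1$. Nothing further is needed.
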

\begin{proof} Let $k=1, 2,...,$ $ 1 \leq p < +\infty $, $ 1 \leq q \leq +\infty$,
$0<\beta<1$ and $f\in \mathcal{E}(\mathbb{R})$ such that
$\Lambda_\alpha^{k-1}f,\,\Lambda_\alpha^k f$ are in
$L^p(\mu_\alpha).$ For $x \in (0,+\infty),$ we obtain by (3.3) and
(3.4),
\begin{align*}\omega_{p,\alpha}^k(x,f)\leq c \,x^k \lVert \Lambda_\alpha^k f
\rVert_{p,\alpha} \quad\mbox{and}\quad \omega_{p,\alpha}^k(x,f)\leq
c \,x^{k-1} \lVert \Lambda_\alpha^{k-1} f
\rVert_{p,\alpha}.\end{align*} Then we can write,
\begin{align*}
\int_0^{+\infty}
\Big(\frac{\omega_{p,\alpha}^k(x,f)}{x^{\beta+k-1}}\Big)^q
\frac{dx}{x}\leq  c \int_0^1 \Big(\frac{\| \Lambda_\alpha^k f
\|_{p,\alpha}}{x^{\beta-1}}\Big)^q \frac{dx}{x}+c \int_1^{+\infty}
\Big(\frac{\| \Lambda_\alpha^{k-1} f
\|_{p,\alpha}}{x^{\beta}}\Big)^q \frac{dx}{x},
\end{align*} giving two finite integrals. Here when $q=+\infty$, we make the usual
modification.
\end{proof}
\begin{ex} From (2.5) and Proposition 3.1, we can assert that the
spaces $\mathcal{C}_c^\infty(\mathbb{R})$ and
$\mathcal{S}(\mathbb{R})$ are included in
$\mathcal{B}^k\mathcal{D}_{p,q}^{\beta,\alpha}.$
\end{ex}
\begin{thm}
  Let $0 <\beta <1$, $ k=1,2,...,$ $ 1 \leq p < +\infty $ and $ 1 \leq q \leq +\infty$, then
  \begin{equation*}
   \mathcal{B}^k\mathcal{D}_{p,q}^{\beta,\alpha} =
   \mathcal{K}^k\mathcal{D}_{p,q}^{\beta,\alpha}.
  \end{equation*}
  \end{thm}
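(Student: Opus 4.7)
The plan is to establish the two inclusions separately. For $\mathcal{K}^{k}\mathcal{D}_{p,q}^{\beta,\alpha}\subset\mathcal{B}^{k}\mathcal{D}_{p,q}^{\beta,\alpha}$, I would take any admissible decomposition $f=f_0+f_1$ and combine linearity of $R_k(y,\cdot)$ with estimate (3.4) applied to $f_0$ and Lemma~3.1 written at index $k+1$ applied to $f_1$, obtaining
\[
\|R_k(y,f)\|_{p,\alpha}\leq c|y|^{k-1}\|\Lambda_\alpha^{k-1}f_0\|_{p,\alpha}+c|y|^{k}\|\Lambda_\alpha^{k}f_1\|_{p,\alpha}.
\]
Taking the supremum over $|y|\leq x$ (using Remark~3.3(1) to rewrite $\omega_{p,\alpha}^{k}$) and then the infimum over decompositions yields $\omega_{p,\alpha}^{k}(x,f)\leq c\,x^{k-1}K_{p,\alpha}^{k}(x,f)$, so $L^{q}(dx/x)$-control of $K_{p,\alpha}^{k}/x^{\beta}$ transfers to $\omega_{p,\alpha}^{k}/x^{\beta+k-1}$.

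For the reverse inclusion I will construct, at each scale $x_0>0$, an explicit mollifier decomposition. Fix $\phi\in\mathcal{C}_c^{\infty}(\mathbb{R})$ supported in $[-1,1]$ with $\int\phi\,A_\alpha\,dy=1$, set $\phi_{x_0}(y)=x_0^{-2\alpha-2}\phi(y/x_0)$, and define $f_1(a)=\int_{\mathbb{R}}\phi_{x_0}(y)\tau_y f(a)A_\alpha(y)\,dy$, $f_0=f-f_1$; by (2.2), $f_1\in\mathcal{E}(\mathbb{R})\cap L^{p}(\mu_\alpha)$. For $j\in\{k-1,k\}$, the symmetry $\tau_y f(a)=\tau_a f(y)$ together with $j$ integrations by parts in $y$, legitimate by the compact support of $\phi_{x_0}$, gives
\[
\Lambda_\alpha^{j}f_1(a)=(-1)^{j}\int_{\mathbb{R}}\Lambda_\alpha^{j}\phi_{x_0}(y)\,\tau_y f(a)\,A_\alpha(y)\,dy.
\]
Substituting the Taylor expansion (2.3) and noting that one further integration by parts converts $\int\Lambda_\alpha^{j}\phi_{x_0}(y)b_p(y)A_\alpha(y)\,dy$ into $(-1)^{j}\int\phi_{x_0}(y)\Lambda_\alpha^{j}b_p(y)A_\alpha(y)\,dy$, every polynomial term drops whenever $p<j$: the identity $\Lambda_\alpha b_{p+1}=b_p$, a direct check from Proposition~2.1(i), forces $\Lambda_\alpha^{j}b_p\equiv 0$ in that range. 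Hence all polynomial contributions vanish when $j=k$, and when $j=k-1$ only the $p=k-1$ term survives and cancels $\Lambda_\alpha^{k-1}f(a)$ on the left. The surviving pieces are the clean expressions
\[
\Lambda_\alpha^{k-1}(f-f_1)(a)=(-1)^{k}\int\Lambda_\alpha^{k-1}\phi_{x_0}(y)\,R_k(y,f)(a)\,A_\alpha(y)\,dy,
\]
\[
\Lambda_\alpha^{k}f_1(a)=(-1)^{k}\int\Lambda_\alpha^{k}\phi_{x_0}(y)\,R_k(y,f)(a)\,A_\alpha(y)\,dy.
\]

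Since $\phi_{x_0}$ is supported in $[-x_0,x_0]$, Remark~3.3(1) bounds $\|R_k(y,f)\|_{p,\alpha}$ by $\omega_{p,\alpha}^{k}(x_0,f)$ throughout the range of integration; together with Minkowski's integral inequality and the scaling $\|\Lambda_\alpha^{j}\phi_{x_0}\|_{L^{1}(A_\alpha dy)}\leq c_j x_0^{-j}$ this yields $\|\Lambda_\alpha^{k-1}f_0\|_{p,\alpha}\leq c\,\omega_{p,\alpha}^{k}(x_0,f)/x_0^{k-1}$ and $\|\Lambda_\alpha^{k}f_1\|_{p,\alpha}\leq c\,\omega_{p,\alpha}^{k}(x_0,f)/x_0^{k}$. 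Consequently $K_{p,\alpha}^{k}(x_0,f)\leq c\,\omega_{p,\alpha}^{k}(x_0,f)/x_0^{k-1}$ and the $L^{q}$-integrability transfers in the desired direction, closing the argument. The main technical obstacle is the careful chain of Dunkl integrations by parts combined with the systematic cancellation of every polynomial Taylor coefficient through $\Lambda_\alpha^{j}b_p\equiv 0$ for $p<j$; once this collapse is in place, the remaining work is a routine scaling estimate.
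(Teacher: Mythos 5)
Your first inclusion ($\mathcal{K}^k\mathcal{D}_{p,q}^{\beta,\alpha}\subset\mathcal{B}^k\mathcal{D}_{p,q}^{\beta,\alpha}$) is exactly the paper's argument: linearity of $R_k(y,\cdot)$, estimate (3.4) on $f_0$, Lemma 3.1 shifted by one index (i.e.\ (3.3)) on $f_1$, then the infimum over decompositions. For the reverse inclusion your route is genuinely different and, as far as I can check, correct. The paper builds the decomposition out of its own machinery: it sets $f_1=\frac{1}{b_k(x)}I_k(x,f)$, where $I_k$ is the iterated $\Theta_0$-integral of Lemma 3.4, so that (3.8) gives $\Lambda_\alpha^k f_1=\frac{1}{b_k(x)}R_k(x,f)$ at a single stroke, and then uses (3.5), (3.1), (3.8) to write $\Lambda_\alpha^{k-1}f_0$ as $-\frac{1}{b_k(x)}\int_{-x}^{x}\Theta_0(x,y)R_k(y,f)A_\alpha(y)\,dy$, bounded via (3.2). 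You instead mollify with a dilated bump $\phi_{x_0}$ in the Dunkl translation, and your cancellation of the polynomial part rests on two ingredients the paper never states: the skew-adjointness $\int\Lambda_\alpha g\cdot h\,A_\alpha\,dy=-\int g\,\Lambda_\alpha h\,A_\alpha\,dy$ (standard, and available in the cited R\"osler lecture notes, but you should cite or prove it --- it is the load-bearing step in passing $\Lambda_\alpha^{j}$ from $a$ to $y$ and back onto $\phi_{x_0}$ and $b_p$), and the identity $\Lambda_\alpha b_{p+1}=b_p$, which does check out directly from Proposition 2.1(i). Your approach is closer in spirit to the classical L\"ofstr\"om--Peetre test-function argument and bypasses Lemmas 3.2--3.4 entirely, at the cost of importing the integration-by-parts formula; the paper's approach stays entirely inside the kernel calculus it has already set up ($\Theta_0$, $I_k$, and the recursion $\int\Theta_0(x,y)b_p(y)A_\alpha(y)\,dy=b_{p+1}(x)$), which is why those lemmas are there. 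Both yield the same two-sided comparison $c^{-1}x^{k-1}K_{p,\alpha}^k(x,f)\leq\omega_{p,\alpha}^k(x,f)\leq c\,x^{k-1}K_{p,\alpha}^k(x,f)$, and the transfer of the $L^q(dx/x)$ condition is immediate in either case.
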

\begin{proof}
   We start with the proof of the inclusion
   $\mathcal{K}^k\mathcal{D}_{p,q}^{\beta,\alpha}\subset\mathcal{B}^k\mathcal{D}_{p,q}^{\beta,\alpha}$.
    Let $ f$ a function in $\mathcal{K}^k\mathcal{D}_{p,q}^{\beta,\alpha}$. If $f=f_0+f_1$, $f_0 \in \mathcal{D}_{p,\alpha}^{k-1}$
     and $f_1 \in \mathcal{D}_{p,\alpha}^k$ is any decomposition of
     $f,$ we have by (3.3)
 \begin{align}
   \omega_{p,\alpha}^k(x,f_1) &=  \sup_{|y| \leq x} \| R_k(y,f_1)\|_{p,\alpha}\nonumber\\
   &\leq c \  x^k \| \Lambda_\alpha^k f_1 \|_{p,\alpha},\quad  x
   \in (0,+\infty).
 \end{align}
Using (3.4), we obtain
  \begin{align}
   \omega_{p,\alpha}^k(x,f_0) &\leq \sup_{|y| \leq x} \| R_{k-1}(y,f_0)\|_{p,\alpha}+
    \sup_{|y| \leq x} \| b_{k-1}(y) \Lambda_\alpha^{k-1} f_0  \|_{p,\alpha} \nonumber \\
   &\leq c \  x^{k-1} \| \Lambda_\alpha^{k-1} f_0 \|_{p,\alpha}, \quad  x
   \in (0,+\infty).
  \end{align}
Hence by (3.9) et (3.10), we deduce that
\begin{align*}
 \omega_{p,\alpha}^k(x,f) \leq c \  x^{k-1} K_{p,\alpha}^k(x,f),
\end{align*}
then, $ f \in \mathcal{B}^k\mathcal{D}_{p,q}^{\beta,\alpha}.$\\
  To prove the inclusion $\mathcal{B}^k\mathcal{D}_{p,q}^{\beta,\alpha}
  \subset \mathcal{K}^k\mathcal{D}_{p,q}^{\beta,\alpha}$, we have to make for
  $ f \in \mathcal{B}^k\mathcal{D}_{p,q}^{\beta,\alpha}$ a proper choice of $f_0$ and $f_1.$ We
  take for $ x\in (0,+\infty)$
  $$ f_1 = \frac{1}{b_k(x)}  \;  I_k(x,f). $$
  Using (3.8), we obtain
\begin{align}
  x \| \Lambda_\alpha^k f_1 \|_{p,\alpha} &\leq \ x \; \big(b_k(x)\big)^{-1}\omega_{p,\alpha}^k(x,f)\nonumber\\
&\leq c \; \frac{\omega_{p,\alpha}^k(x,f)}{x^{k-1}}.
  \end{align}
On the other hand, put $f_0= f-f_1 $, we can write by (3.5)
$$ f_0 = -\frac{1}{b_k(x)}  \int_{-x}^{x} \Theta_0 (x,y)
 \big(I_{k-1}(y,f) - b_{k-1}(y) f \big) A_\alpha(y) dy.$$
  From (3.1) and (3.8), we obtain
  $$ \Lambda_\alpha^{k-1} f_0 = -\frac{1}{b_k(x)} \int_{-x}^{x} \Theta_0 (x,y)R_k(y,f)A_\alpha(y) dy.$$
By Minkowski's inequality for integrals and (3.2), we get
\begin{align}
  \| \Lambda_\alpha^{k-1} f_0 \|_{p,\alpha}
&\leq  \big(b_k(x)\big)^{-1} \int_{-x}^{x} |\Theta_0 (x,y)| \ \|
R_k(y,f)
 \|_{p,\alpha} A_\alpha(y) dy \nonumber \\
  & \leq  c \; x^{-k} \omega_{p,\alpha}^k(x,f)\int_{-x}^{x} |\Theta_0 (x,y)| \ A_\alpha(y) dy \nonumber \\
 &\leq  c \; \frac{\omega_{p,\alpha}^k(x,f)}{x^{k-1}}.
\end{align}
By (3.11) et (3.12), we deduce that
\begin{align*}
 K_{p,\alpha}^k(x,f) \leq c\
 \frac{\omega_{p,\alpha}^k(x,f)}{x^{k-1}},
\end{align*}
then, $f \in \mathcal{K}^k\mathcal{D}_{p,q}^{\beta,\alpha}$ which
completes the proof of the theorem.
 \end{proof}

\end{document}